\author{Felix Goldberg}
\address{Hamilton Institute, National University of Ireland Maynooth, Ireland}
\email{felix.goldberg@gmail.com}
\author{Abraham Berman}
\address{Department of Mathematics, Technion-IIT, Technion City, Haifa 32000, Israel}
\email{berman@tx.technion.ac.il}
\title{Zero forcing for sign patterns}
\date{June 25, 2013}
\dedicatory{Dedicated to the memory of Michael Neumann and Uriel Rothblum}
\newtheorem{thm}{Theorem}[section]
\newtheorem{rmrk}[thm]{Remark}
\newtheorem{defin}[thm]{Definition}
\newtheorem{expl}[thm]{Example}
\newtheorem{qstn}[thm]{Question}
\newtheorem{rul}{Rule}
\begin{document}

\begin{abstract}
We introduce a new variant of zero forcing - \emph{signed zero forcing}. The classical zero forcing number provides an upper bound on the maximum nullity of a matrix with a given graph (i.e. zero-nonzero pattern). Our new variant provides an analogous bound for the maximum nullity of a matrix with a given sign pattern. This allows us to compute, for instance, the maximum nullity of a $Z$-matrix whose graph is $L(K_{n})$, the line graph of a clique.
\end{abstract}

\subjclass{05C50,15B35,05C57}

\keywords{minimum rank, nullity, zero forcing, signed zero forcing, sign pattern, color-change rule, $Z$-matrix, generalized Laplacian, Colin de Verdi\`{e}re number}

\maketitle

\section{Introduction}
One of the most vibrant areas in algebraic graph theory in recent years has been the study of minimum rank/maximum nullity problems. If
$G$ is a simple graph on $n$ vertices, labelled $\{1,\ldots,n\}$, let $M^{\mathbb{F}}(G)$ be the maximum possible nullity of a symmetric $n \times n$ matrix over field $\mathbb{F}$ whose graph is $G$ (\emph{i.e.} for all $i \neq j$, the $ij$ entry of the matrix is non-zero if and only if $i,j$ are adjacent in $G$). The minimum possible rank of such a matrix is denoted $mr^{\mathbb{F}}(G)$ and it is not hard to see that $M^{\mathbb{F}}(G)+mr^{\mathbb{F}}(G)=n$. Therefore, lower bounds on $mr$ are equivalent to upper bounds on $M$. For many such results obtained by various authors to date we can direct the reader to the surveys \cite{FalHog07, FalHog11} by Fallat and Hogben.

A popular way of bounding the maximum nullity from above is by the so-called \emph{zero forcing} method. Zero forcing is a combinatorial game played on the vertices of a graph, during which the vertices are colored black and white. It was first formally defined and studied in \cite{AIM_zeroforce08}, although inchoate versions had been in use for some time before. Further variants of the basic idea were introduced and are surveyed in \cite{NewZF_with_TW}. 


In this paper we address an issue that was raised by Hogben in \cite[p. 208]{Hog11}: how to adapt the zero forcing number argument to sign patterns? 

A sign pattern $P$ is a $m \times n$ matrix whose entries are drawn from $\{+,-,0,?\}$ and a real $m \times n$ matrix $A$ is said to have sign pattern $P$ if the sign of $A_{ij}$ is equal to $P_{ij}$ whenever $P_{ij} \neq ?$. There is no restriction on the $ij$th entry of $A$ if $P_{ij}=?$. 
\begin{rmrk}
All sign patterns in this paper are assumed to be square, \emph{i.e.} $m=n$.
\end{rmrk}
A square sign pattern naturally corresponds to a signed directed graph and the indices in $\{1,2,\ldots,n\}$ will be referred to as the vertices of the sign pattern.
\begin{defin}
Let $P$ be a square sign pattern. If $P_{ij} \neq ?$ whenever $i \neq j$ we will say that $P$ \emph{has fixed periphery}.
\end{defin}
\begin{rmrk}
All sign patterns in this paper are assumed to have fixed periphery.
\end{rmrk}

We shall denote by $M^{\mathbb{R}}(P)$ or simply $M(P)$ the maximum nullity of a real matrix whose sign pattern is $P$. 

The problem of determining the ranks/nullities of sign patterns has been first posed by Johnson \cite{Joh82} in 1982, with the last decade or so witnessing a vigorous renewal of interest in it, motivated partly by applications in complexity theory. For further background information we can direct the reader to the introduction of \cite{Li_etal12} where a list of most of the relevant papers on the subject to date has been compiled.
 

We introduce a new variant, \emph{signed zero forcing}, which allows us in some cases to give non-trivial upper bounds on the maximum nullity of a sign pattern. Our main result is Theorem \ref{thm:main} which is the counterpart for sign patterns of the known upper bound on the maximum nullity given by classical zero forcing (we quote the latter bound here as Theorem \ref{thm:classical}).
 
The plan of the paper is: the classical game is described and analyzed in Section \ref{sec:classical} and our new game is defined in Section \ref{sec:newzf}. Section \ref{sec:proof} is devoted to the proof of Theorem \ref{thm:main} while in Section \ref{sec:branch} we very briefly sketch an extension of our new game that can yield even better bounds. Then, starting with Section \ref{sec:graphs} we turn our attention to graphs again and show how a corollary of Theorem \ref{thm:main} can be applied to bound the maximum possible nullity of a $Z$-matrix with a given graph.

\section{Classical zero forcing and nullity}\label{sec:classical}

We start the game by choosing a subset of vertices $S \subseteq V(G)$ and coloring them black. The other vertices remain white for now. Then the following color-change rule is applied until no more changes are possible. The resultant coloring is called the \emph{derived coloring of $S$}:

\begin{rul}[Classical zero forcing rule]\label{rul:zf_classical}
If $u$ is a black vertex of $G$ and exactly one neighbour $w$ of $u$ is white, then change the color of $w$ to black.
\end{rul}

A set $S \subseteq V(G)$ is said to be a \emph{zero forcing set for $G$} if all vertices of $G$ are black in the derived coloring of $S$. The \emph{zero forcing number of $G$}, $Z(G)$, is then the minimum size of a zero forcing set for $G$.

\begin{thm}\cite{AIM_zeroforce08}\label{thm:classical}
$M^{\mathbb{F}}(G) \leq Z(G)$, for any field $\mathbb{F}$.
\end{thm}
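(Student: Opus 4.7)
The plan is to show that for any symmetric matrix $A$ over $\mathbb{F}$ whose graph is $G$, and for any zero forcing set $S$ of $G$, one has $\dim \ker A \leq |S|$. Taking $|S| = Z(G)$ and maximizing over $A$ then gives the theorem. The route is to exhibit an injective linear map $\ker A \to \mathbb{F}^{|S|}$, namely the restriction $x \mapsto (x_i)_{i \in S}$. Injectivity amounts to the claim that any $x \in \ker A$ vanishing on $S$ must be the zero vector.

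To establish this claim I would run an induction that tracks the color-change process in parallel with the equations $Ax = 0$. The invariant to maintain is: at every stage of the forcing, $x_v = 0$ for every black vertex $v$. The base case is the initial coloring, where blackness of $v$ means $v \in S$, for which $x_v = 0$ holds by assumption.

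For the inductive step, suppose the invariant holds, and that the rule permits a black vertex $u$ to force a white neighbor $w$ (so every neighbor of $u$ other than $w$ is already black). I would write out the $u$-th coordinate of the equation $Ax = 0$ as
\[
A_{uu} x_u + \sum_{j \sim u,\, j \neq w} A_{uj} x_j + A_{uw} x_w = 0.
\]
By the induction hypothesis, $x_u = 0$ and $x_j = 0$ for each black neighbor $j \neq w$ of $u$, so all terms on the left vanish except $A_{uw} x_w$. Since $u$ is adjacent to $w$ in $G$, the defining property of the graph of $A$ gives $A_{uw} \neq 0$, hence $x_w = 0$. This maintains the invariant when $w$ is recolored black.

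Because $S$ is a zero forcing set, the derived coloring is all black, so iterating the invariant yields $x = 0$. The main conceptual point, and the only non-bookkeeping step, is the translation of Rule~\ref{rul:zf_classical} into the observation that the scalar equation $(Ax)_u = 0$ has exactly one term with an undetermined factor whenever $u$ has exactly one white neighbor; everything else is induction. Note that symmetry of $A$ is not actually used in this argument — only the fact that $A$ has graph $G$ — which is why the bound holds over an arbitrary field $\mathbb{F}$.
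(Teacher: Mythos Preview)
Your proof is correct and follows essentially the same route as the paper's own argument: both show that $x \in \ker A$ with $x|_{S}=0$ forces $x=0$ by reading off the $u$th equation of $Ax=0$ when $u$ has a single white neighbour, and then invoke a dimension count (you phrase it as injectivity of the restriction map, the paper as a ``standard dimensional argument'' by contradiction). Your closing remark that symmetry is not actually used is a valid observation, but the overall strategy is identical.
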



Let us briefly review here the proof of Theorem \ref{thm:classical} given in \cite[Propositions 2.2-2.4]{AIM_zeroforce08}. Some notations: we will denote the kernel of matrix $A$ by $\ker{A}$ and the subvector of vector $x$ formed on the entries indexed by a set $S$ by $x|_{S}$.

Consider a symmetric matrix $A$ whose graph is $G$ and a vector $x \in \mathbb{F}^{n}$. Denoting adjacency in $G$ by $\sim$, we have for any vertex $u$:

\begin{equation}\label{eq:basic}
(Ax)_{u}=a_{uu}x_{u}+\sum_{v \sim u}{a_{uv}x_{v}}.
\end{equation}

Now suppose that $S$ is a zero forcing set for $G$ and that $x \in \ker{A}$. We argue that if $x|_{S}=0$, then $x=0$. Indeed, there is some vertex $u \in S$ to which we can apply Rule \ref{rul:zf_classical} (or else, $S=V(G)$ and we are done). That is, $u$ has only one white neighbour, $w$. Looking at (\ref{eq:basic}), we obtain 
\begin{equation}\label{eq:one}
0=(Ax)_{u}=a_{uw}x_{w}
\end{equation}
and thus $x_{w}=0$. An inductive argument shows that in fact all entries of $x$ are zero.

Finally, suppose, for the sake of contradiction, that $M^{\mathbb{F}}(G)>Z(G)$. Therefore there is a symmetric matrix $A$ (whose graph is $G$), such that the nullity of $A$ is greater than $Z(G)$. Let $S$ be a minimum zero forcing set, with $|S|=Z(G)$. A standard dimensional argument now implies the existence of a nonzero $x \in \ker{A}$ that vanishes on the vertices of $S$. By the foregoing argument if follows that $x=0$ - a contradiction.


\section{Signed zero forcing}\label{sec:newzf}

\subsection{Informal overview of the new method}\label{sec:newmot}

The main plank of the argument we discussed in the previous section was the deduction from Equation (\ref{eq:basic}) that some of the entries of the vector $x$ must be zero, given a set of known zero coordinates of $x$. To achieve that goal we had to reduce the expression on the right hand side of (\ref{eq:basic}) to just one summand in (\ref{eq:one}), since we had no information on the signs of the summands of \eqref{eq:basic}. 

However, now that we are no longer considering zero-nonzero patterns but sign patterns (with fixed periphery), the signs of the $a_{uv}$s are fixed and known and therefore the signs of the summands of \eqref{eq:basic} depend only on the signs of the $x_{v}$s. We shall now see how to make good use of this fact.

Let $A$ be a matrix whose sign pattern is $P$ and write $j \rightarrow i$ whenever $P_{ij} \in \{+,-\}$. Let $u \in \{1,2,\ldots,n\}$ and assume for the moment that $x_{u}=0$ as before. We denote $$W=\{w | w \rightarrow u\}.$$ Then Equation \eqref{eq:one} takes on the form
\begin{equation}\label{eq:h}
0=\sum_{w \in W}{a_{uw}x_{w}}.
\end{equation}

Define now also the sets 
$$W_{+}=\{w \in W| a_{uw}x_{w} \geq 0\},$$ 
$$W_{-}=\{w \in W| a_{uw}x_{w} \leq 0\}.$$ 

If, say, $W=W_{+}$ then the only way for Equation \eqref{eq:h} to hold is to have $x_{w}=0$ for all $w \in W$. This inference corresponds to the blackening of vertices in the zero forcing game - we deduce that some of the entries of the vector $x$ must be zero and blacken the corresponding vertices.

Another possible deduction from Equation \eqref{eq:h} is in the case that $W=W_{+} \cup \{w\}$. In this case we immediately deduce that $a_{uw}x_{w} \leq 0$ must be true for \eqref{eq:h} to hold and thus the weak sign of $x_{w}$ is the opposite of the sign of $a_{uw}$.
Thus we see that knowledge of the signs of some entries of $x$ can lead to the determination of the signs of other entries. (the case $W=W_{-} \cup \{w\}$
is, of course, symmetric).

What we need now is a way to incorporate this reasoning into the zero forcing game. To do this, we add to the game another element: white vertices may now be marked as $+$ or $-$ vertices. \emph{The markers at a given moment represent the information we have at this stage about the possible weak signs of the entries of $x$}. As the game progresses, marked white vertices can be colored black (when we learn that their entries are in fact zero) - in which case the sign marker disappears under the black paint.


Finally, we observe that the assumption $x_{u}=0$ is too stringent. In fact, if $P_{uu} \neq ?$ we can include the term $a_{uu}x_{u}$ in \eqref{eq:h} and analyze it on the same basis as the other terms. In combinatorial terms, this means that our new color-change rule can be applied either to black vertices or to white vertices whose "loop" has a known sign.


\subsection{Seeding markers}\label{sec:seeding}
We have seen in the previous subsection that given the signs of some entries in $x$ (which correspond to  vertices of the sign pattern) we may be able to determine the signs of some other entries. But how does one, so to speak, gets the ball rolling? How does one obtain the first sign?

This is actually very simple: after the vertices of $S$ have been blackened, we can choose \emph{any} vertex and mark it with $+$! 

Since the sign markers represent our belief about the signs of the entries of $x$, our seemingly arbitrary choice of $+$ amounts to choosing one vector from the pair $x,(-x)$. We can then continue the game, using the chosen vector, obtainining the same derived coloring for $x$ and $-x$. 
	
One important observation remains to be made: we may seed the markers anew at some point in the game. This happens if all marked vertices had been blacked out, but some white vertices still remain.

\subsection{Formalization}

The \emph{signed zero forcing} game proceeds as the usual zero forcing game, but a new zero forcing rule is applied. Note that clause (a) of the new rule subsumes the classical rule and that clause (d) is a formalization of the seeding concept discussed above. 

To state the rule formally a few extra notations are needed: the sign inversion function $\iota$ is defined by $\iota(+)=-$ and $\iota(-)=+$; the letters $s$ and $t$ will be used as indices taking values in $\{+,-\}$ and $s \cdot t$ will be defined as $+$ if $s=t$ and as $-$ if $s \neq t$. Finally, if $w$ is a white vertex, we will denote by $m(w)$ its marker if it is marked; otherwise we will write $m(w)=*$.

\begin{rul}[Signed zero forcing rule for sign pattern $P$]\label{rul:new}
Let $u$ be a vertex of $P$ such that either $u$ is black or $u$ is white and $P_{uu} \neq ?$.
Let $$W=\{w | w \ \textit{is white} \wedge w \rightarrow u \}\cup \{u\}.$$  
Define $$W_{+}=\{w \in W| m(w)=P_{uw}\}, W_{-}=\{w \in W| m(w) \neq P_{uw}\}$$ and $$W_{*}=\{w \in W| m(w)=*\}.$$
\begin{enumerate}
\item
If $W=\{w\}$, color $w$ black.
\item
If either $W_{+}=W$ or $W_{-}=W$, color all vertices in $W$ black.
\item
If $W_{s} \neq \emptyset,W_{\iota(s)}=\emptyset$, and $W_{*}=\{w\}$, mark $w$ with $P_{uw} \cdot \iota(s)$.
\item 
If no white vertices in the whole graph are marked and $u$ is white, then mark $u$ with $+$.
\end{enumerate}
\end{rul}

\begin{defin}
The \emph{signed zero forcing number} of $P$, $Z_{\pm}(P)$, is the size of the minimum forcing set when playing the game as outlined in this section.
\end{defin}

The arguments expounded in Sections \ref{sec:newmot} and \ref{sec:seeding} can be crystallized into the following statement, which is the main result of the paper. 

\begin{thm}\label{thm:main}
Let $P$ be a sign pattern with fixed periphery. Then:
$$M^{\mathbb{R}}(P) \leq Z_{\pm}(P).$$
\end{thm}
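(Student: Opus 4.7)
The plan is to mimic the structure of the proof of Theorem \ref{thm:classical} recalled in Section \ref{sec:classical}, but to maintain a finer invariant that simultaneously tracks known-zero entries and known weak signs of entries of a kernel vector. Suppose, for contradiction, that $M^{\mathbb{R}}(P) > Z_{\pm}(P)$; choose a real matrix $A$ with sign pattern $P$ whose nullity exceeds $Z_{\pm}(P)$, let $S$ be a minimum signed zero forcing set, and invoke the standard dimension argument to produce a nonzero vector $x \in \ker A$ with $x|_{S} = 0$.

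Throughout the play of the signed zero forcing game starting from $S$, I will maintain the following invariant: there is a sign $\varepsilon \in \{+1,-1\}$ such that $y := \varepsilon x$ satisfies $y_v = 0$ for every black vertex $v$, $y_v \geq 0$ for every vertex $v$ with $m(v)=+$, and $y_v \leq 0$ for every vertex $v$ with $m(v)=-$. The sign $\varepsilon$ is fixed as soon as the first marker is placed and may be revisited only at those moments when no white vertex is marked, since only then are the marker clauses vacuous. To verify that each clause of Rule \ref{rul:new} preserves the invariant I apply \eqref{eq:basic} at the vertex $u$ under consideration and rewrite $(Ay)_u = 0$ as $\sum_{v \in W} a_{uv} y_v = 0$, using that every white in-neighbour of $u$ outside $W$ has already been blackened (hence contributes $0$) and that the diagonal slot $a_{uu}y_u$ either vanishes (when $u$ is black) or participates as a summand whose weak sign is dictated by $m(u)$ together with $P_{uu}$ (when $u$ is white with $P_{uu}\neq ?$). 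Under this interpretation, clauses (a) and (b) follow from the elementary fact that a sum of reals that are all weakly of one sign can vanish only if each summand vanishes (the relevant $a_{uv}$ being nonzero by fixed periphery), and clause (c) follows from the symmetric observation that if all summands but one are weakly of one sign and the total is $0$, the remaining summand must be weakly of the opposite sign; multiplying the forced weak sign of $y_w$ by the fixed sign $P_{uw}$ then recovers the marker $P_{uw}\cdot \iota(s)$ prescribed by the rule, in both sub-cases $s=+$ and $s=-$.

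Clause (d) is handled by the seeding argument of Section \ref{sec:seeding}: since no white vertex currently bears a marker, the invariant places no constraint on $\varepsilon$, so we may flip $\varepsilon$ if necessary to arrange $y_u \geq 0$ before declaring $m(u)=+$, after which the invariant holds and $\varepsilon$ is locked in until the markers next clear. When $S$ is a signed zero forcing set the game terminates with every vertex black, so the invariant delivers $y=0$ and hence $x=0$, contradicting $x\neq 0$. I expect the main obstacle to lie in the careful bookkeeping around the diagonal slot, where the dual role of $u$ as a member of $W$ (black vertex versus white vertex carrying a loop of prescribed sign) must be reconciled with the sign arithmetic of clauses (b) and (c); a secondary subtlety is checking that repeated invocations of clause (d) are consistent, which hinges on the fact that a flip $\varepsilon\mapsto-\varepsilon$ leaves already-black vertices unchanged (since $-0=0$) and is permitted only when the whole marker record is empty.
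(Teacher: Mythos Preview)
Your proposal is correct and follows essentially the same approach as the paper's proof: both set up the contradiction via the dimensional argument, then verify by induction over the moves of the game that a kernel vector $x$ vanishing on $S$ must vanish everywhere, checking each clause of Rule~\ref{rul:new} against the basic identity~\eqref{eq:h}. The only cosmetic difference is in how the sign invariant is phrased---you carry an explicit global sign $\varepsilon$ and work with $y=\varepsilon x$, whereas the paper records the same information relatively via the conditions $x_{w_1}x_{w_2}\ge 0$ (equal markers) and $x_{w_1}x_{w_2}\le 0$ (unequal markers); your formulation makes the seeding step (clause~(d)) particularly transparent, since flipping $\varepsilon$ is exactly the freedom one has when no markers are present.
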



Before proceeding to the formal proof of Theorem \ref{thm:main} (in effect, just a careful codification of the informal arguments given before) we wish to illustrate the operation of the game on a small example. 

\begin{expl}\label{ex:hadamard}

Consider the $4 \times 4$ Hadamard sign pattern:
$$
P=\left(\begin{array}{cccc} 
+&+&+&+\\ 
+&-&+&-\\
+&+&-&-\\
+&-&-&+
\end{array}\right).
$$

We are going to show that $Z_{\pm}(P)=2$, and therefore $M(P) \leq 2$. In fact, Hogben \cite[p.206]{Hog11} has shown that $mr(P)=3$, and therefore $M(P)=1$; we will see in Section \ref{sec:branch} how a natural extension of our technique enables us to prove that $M(P)=1$ just as well.

Let $S=\{1,2\}$ and colour the vertices of $S$ black. Now we apply Rule \ref{rul:new}(d) to mark vertex $3$ with $+$. Next we apply Rule \ref{rul:new}(c) to $u=3$. In this case we have $W=\{3,4\}$ and $W_{-}=\{3\},W_{*}=\{4\}$. We see that vertex $4$ can be marked with $P_{34} \cdot \iota(-)$, that is with $-$. Finally, we apply Rule \ref{rul:new}(b) to vertex $u=2$; now we have $W=W_{+}=\{3,4\}$ and so both $3$ and $4$ can be blackened, finishing the game.

Note that we could have replaced the last move with the application of Rule \ref{rul:new}(b) to $u=4$ instead, in which case we would have had $W=W_{-}=\{3,4\}$.

\end{expl}

\section{Proof of Theorem \ref{thm:main}}\label{sec:proof}

\begin{proof}
Suppose that $S$ is a minimum forcing set under Rule \ref{rul:new}, so that $|S|=Z_{\pm}(P)$. This means that the signed zero forcing game can be played in, say, $m$ moves: $\mathbb{M}_{1},\mathbb{M}_{2},\ldots,\mathbb{M}_{m}$. The first move $\mathbb{M}_{1}$ consists of coloring the vertices of $S$ in black. The remaining moves are repeated applications of Rule \ref{rul:new} - until all the vertices of $P$ are black.

Let $B^{k}$ be the sets of vertices that are black after the move $\mathbb{M}_{k}$ has been played. Observe that $B^{1}=S$ and $B^{m}=\{1,2,\ldots,n\}$. Let also $M_{k}$ be the set of white vertices with markers after move $\mathbb{M}_{k}$ and let $m_{k}(w)$ be the marker of $w$ at this stage, for a vertex $w \in M_{k}$.

Now let $A$ be a matrix whose sign pattern is $P$ and let $x \in \ker{A}$.

\emph{Claim} If $x|_{S}=0$, then for any $1 \leq k \leq m$ it holds that:
\begin{itemize}
\item
$x|_{B^{k}}=0$.
\item
If $w_{1},w_{2}\in M_{k}$ and $m(w_{1})=m(w_{2})$, then $x_{w_{1}}x_{w_{2}} \geq 0$.
\item
If $w_{1},w_{2}\in M_{k}$ and $m(w_{1}) \neq m(w_{2})$, then $x_{w_{1}}x_{w_{2}} \leq 0$.
\end{itemize}

The claim, once established, will show that for $x \in \ker{A}$, $x|_{S}=0$ entails $x=0$. The dimensional argument mentioned in Section \ref{sec:classical} will then finish the proof.

We proceed to prove the claim by induction on $k$. The claim is true for $k=1$ as $B^{1}=S$ and no vertices are marked at that stage. Suppose that the claim is true for some $k \geq 1$ and let us show it for $k+1$. The move $\mathbb{M}_{k+1}$ could have been the application of each of the four clauses of Rule \ref{rul:new} to a vertex $u \in B^{k}$. 

Recall now the basic formula (\ref{eq:h}):
\begin{displaymath}
0=\sum_{w \in W}{a_{uw}x_{w}}.
\end{displaymath}

If $\mathbb{M}_{k+1}$ applied the first clause of Rule \ref{rul:new}, with $w$ being the sole white neighbour of $u$, then (\ref{eq:h}) simplifies to $0=a_{uw}x_{w}$ and therefore $x_{w}=0$. Since in this case 
$B^{k+1}=B^{k} \cup \{w\}$ and $M_{k+1} \subseteq M_{k}$, the claim is upheld.

Now suppose that $\mathbb{M}_{k+1}$ applied the second clause of Rule \ref{rul:new} to $u$, with, say, $W_{+}=W$. Then \eqref{eq:h} reduces to $$0=\sum_{v \in W^{+}}{a_{uv}x_{v}}$$
which can be rewritten as:
\begin{equation}\label{eq:cl2}
0=\overbrace{\sum_{\substack{v \in W^{+} \\ m(v)=+}}{a_{uv}x_{v}}}^{A} + 
\overbrace{\sum_{\substack{v \in W^{+} \\ m(v)=-}}{a_{uv}x_{v}}}^{B}.
\end{equation}
We claim that all the summands on the right hand side of \eqref{eq:cl2} have the same weak sign.
Indeed, all the $x_{v}$s that appear in $A$ have the same weak sign by the induction assumption and by the definition of $W^{+}$ the corresponding $a_{uv}$s are positive. On the other hand, by the induction assumption again, the weak sign of all the $x_{v}$s in $B$ is opposite to the weak sign of the $x_{v}$s in $A$, while the $a_{uv}$s in $B$ are negative (again by the definition of $W^{+}$). Summing up, we see that all summands, whether in $A$ or in $B$, have the same weak sign. However, this means that they must all be zero, as their sum is zero. Therefore $x_{v}=0$ for all $v \in W$ and the claim is upheld as $B^{k+1}=B^{k} \cup W$ and $M_{k+1} \subseteq M_{k}$.

Next we have to show that the claim remains valid when the the third clause of Rule \ref{rul:new} is applied. We shall only prove it for $s=+$ since the proof for $s=-$ is the same, \emph{mutatis mutandis}. 

In this case Equation (\ref{eq:h}) takes the form 
$$0=\overbrace{\sum_{v \in W^{+}}{a_{uv}x_{v}}}^{C} + a_{uw}x_{w},$$ where $w$ is the sole unmarked vertex in $W$. The same argument as before shows that all summands in $C$ have the same weak sign. Therefore, we deduce that the $a_{uw}x_{w}$ term has the opposite weak sign. If $v$ is some vertex in $W^{+}$ then $a_{uv}x_{v}$ and $a_{uw}x_{w}$ must have opposite weak signs. We now consider four possible cases and verify that in each of them the claim holds. Note that $m_{k+1}(w)=P_{uw} \cdot \iota(s)$; since $s=+$ it means in effect that $w$ is marked with $+$ if $a_{uw}>0$ and marked with $-$ if $a_{uw}<0$.

Case 1: $m_{k}(v)=+,m_{k+1}(w)=+$. Since $m_{k}(v)=+$, we see that $a_{uv}>0$. Also, $a_{uw}<0$, as discussed above. Therefore, for $a_{uv}x_{v}$ and $a_{uw}x_{w}$ to have opposite weak signs, $x_{v}$ and $x_{w}$ must have the same weak sign - which is just what the claim asserts.

Case 2: $m_{k}(v)=+,m_{k+1}(w)=-$. In this case, $a_{uv}>0$ and $a_{uw}>0$, so if $a_{uv}x_{v}$ and $a_{uw}x_{w}$ are to have opposite weak signs, then $x_{v}$ and $x_{w}$ must have opposite weak signs as well.

Case 3: $m_{k}(v)=-,m_{k+1}(w)=+$. Same kind of argument.

Case 4: $m_{k}(v)=-,m_{k+1}(w)=-$. Same kind of argument.

Finally, for the fourth clause of Rule \ref{rul:new} the claim is true in a trivial way.
\end{proof}

\section{Interlude - branching}\label{sec:branch}
Part of the charm of both classical and signed zero forcing games is that they proceed in a straightforward way (and so are easy to program). However, it might be possible to do better at the signed zero forcing game, at the price of introducing a branching element into the game. 


To expound this idea, consider again Example \ref{ex:hadamard}. Colour the vertex $1$ in black and mark vertex $2$ with $+$, in accordance with Rule \ref{rul:new}(d). Now we create a branch in the game by considering three options for vertex $3$: marked with $+$, marked with $-$, or black. This corresponds to the three possible options for $x_{3}$: positive, negative, or zero. Let us consider all three options and see that they all lead to the blackening of all four vertices, \emph{viz.} to the conclusion that $x=0$.

Case 1: $m(3)=+$. Let $u=1$ - then we have $W_{+}=\{2,3\}$ and $W_{*}=\{4\}$. Therefore we can mark $4$ with $P_{14} \cdot \iota(+)= + \cdot -=-$. Now let $u=4$ and we obtain for it $W_{-}=\{2,3,4\}$. This means we can blacken all three white vertices!

Case 2: $m(3)=-$. Let $u=2$ and then $W_{-}=\{2,3\}$ and $W_{*}=\{4\}$. Therefore we get to mark vertex $4$ with $P_{24} \cdot \iota(-)=- \cdot +=-$. But now let $u=3$ and we see that $W_{+}=\{2,3,4\}$. Once again all three white vertices are blackened.

Case 3: $3$ is black. First consider $u=1$, leading to marking vertex $4$ with $-$. But now if we look at $u=3$ we see that $W_{+}=\{2,4\}$. Total blackout once again.
 
What have introduced here, in effect, a new variant of the game. A tentative name for it can be \emph{branched signed zero forcing}. If we denote it by $Z_{\pm}^{b}$ then we have shown that in our example $Z_{\pm}^{b}(P)=1$ and therefore $M(P) \leq Z_{\pm}^{b}(P) =1$.

It would be interesting to find more cases in which $Z_{\pm}^{b}<Z_{\pm}$.

\section{Signed zero forcing for graphs}\label{sec:graphs}
\begin{defin}
Let $\mathcal{Z}$ be a family of real symmetric matrices defined in the following way: $A \in \mathcal{Z}$ if and only if all non-zero off-diagonal entries of $A$ share the same weak sign.
\end{defin}

The class $\mathcal{Z}$ includes some important matrix classes, such as symmetric $Z$-matrices (in particular, Stieltjes matrices) and symmetric entrywise nonnegative matrices. These leads us to take interest in the following graph parameter: 

\begin{defin}
The maximum possible nullity of a matrix in $\mathcal{Z}$ whose graph is $G$ will be denoted $M^{\mathbb{R}}_{\mathcal{Z}}(G)$.
\end{defin}

Clearly, we can bound $M^{\mathbb{R}}_{\mathcal{Z}}(G)$ from above by the following inequalities:
$$
M^{\mathbb{R}}_{\mathcal{Z}}(G) \leq  M^{\mathbb{R}}(G) \leq Z(G).
$$


But now we are in a position to do better. Define a sign pattern $P_{\mathcal{Z}}(G)$ in the following way: 
$$
P_{\mathcal{Z}}(G)_{ij}=\begin{cases}
? & \text{, if } i=j \\
-   & \text{, if } i \sim j \\
0 & \text{, if } i \neq j, i \not\sim j. \\
\end{cases}
$$

The following result is then an immediate consequence of Theorem \ref{thm:main}:




\begin{thm}\label{thm:maing}
$M^{\mathbb{R}}_{\mathcal{Z}}(G) \leq Z_{\pm}(P_{\mathcal{Z}}(G))$.
\end{thm}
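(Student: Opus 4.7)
The plan is to deduce this directly from Theorem \ref{thm:main} by exhibiting, for any $A\in\mathcal{Z}$ with graph $G$, a matrix with sign pattern $P_{\mathcal{Z}}(G)$ of the same nullity. The only mild subtlety is that the definition of $\mathcal{Z}$ allows the common weak sign of the off-diagonal entries to be either $+$ or $-$, whereas the pattern $P_{\mathcal{Z}}(G)$ has fixed off-diagonal sign $-$. Negation handles this.

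Concretely, I would let $A\in\mathcal{Z}$ be a matrix with graph $G$ achieving $\nullity(A)=M^{\mathbb{R}}_{\mathcal{Z}}(G)$. By definition of the graph of $A$, for $i\neq j$ we have $A_{ij}\neq 0$ iff $i\sim j$, and by definition of $\mathcal{Z}$ all these nonzero off-diagonal entries share a common weak sign. If that common weak sign is $+$, replace $A$ by $-A$; this preserves symmetry, membership in $\mathcal{Z}$, the underlying graph, and the nullity. Thus we may assume $A_{ij}\leq 0$ whenever $i\sim j$, while $A_{ij}=0$ whenever $i\neq j$ and $i\not\sim j$. Since the diagonal entries are unrestricted reals, the sign pattern of $A$ is exactly $P_{\mathcal{Z}}(G)$, which manifestly has fixed periphery.

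Therefore $\nullity(A)\leq M^{\mathbb{R}}(P_{\mathcal{Z}}(G))$, and applying Theorem \ref{thm:main} to $P_{\mathcal{Z}}(G)$ gives $M^{\mathbb{R}}(P_{\mathcal{Z}}(G))\leq Z_{\pm}(P_{\mathcal{Z}}(G))$, yielding the claimed bound $M^{\mathbb{R}}_{\mathcal{Z}}(G)\leq Z_{\pm}(P_{\mathcal{Z}}(G))$. There is no real obstacle here; the only point demanding a moment's care is the negation step, which is needed because $\mathcal{Z}$ is closed under multiplication by $-1$ but $P_{\mathcal{Z}}(G)$ is not.
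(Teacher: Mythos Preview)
Your proposal is correct and follows the same approach as the paper, which simply states that the result is an immediate consequence of Theorem~\ref{thm:main}. You have in fact supplied more detail than the paper does, correctly noting the minor negation step needed because $\mathcal{Z}$ allows either common sign while $P_{\mathcal{Z}}(G)$ fixes the off-diagonal sign as $-$.
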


We will henceforth abuse notation and refer to $Z_{\pm}(P_{\mathcal{Z}}(G))$ as $Z_{\pm}(G)$.

\begin{expl}\label{ex:q3}

Let $Q_{3}$ be the hypercube of dimension $3$, having eight vertices, as shown in Figure \ref{fig:cube3}.

It is not difficult to verify directly that $Z(Q_{3})=4$. We are going to show that $Z_{\pm}(Q_{3})=3$. Consider the drawing in Figure \ref{fig:cube3} on the left. We take $S=\{1,3,7\}$. We seed vertex $5$ with $+$ and then apply Rule \ref{rul:new}(c) to $7$, allowing us to mark $8$ with $-$.
The state of the game is shown in Figure \ref{fig:cube3} on the right.

\begin{figure}[h]
\begin{center}$
\begin{array}{cc}
\includegraphics[width=2.5in]{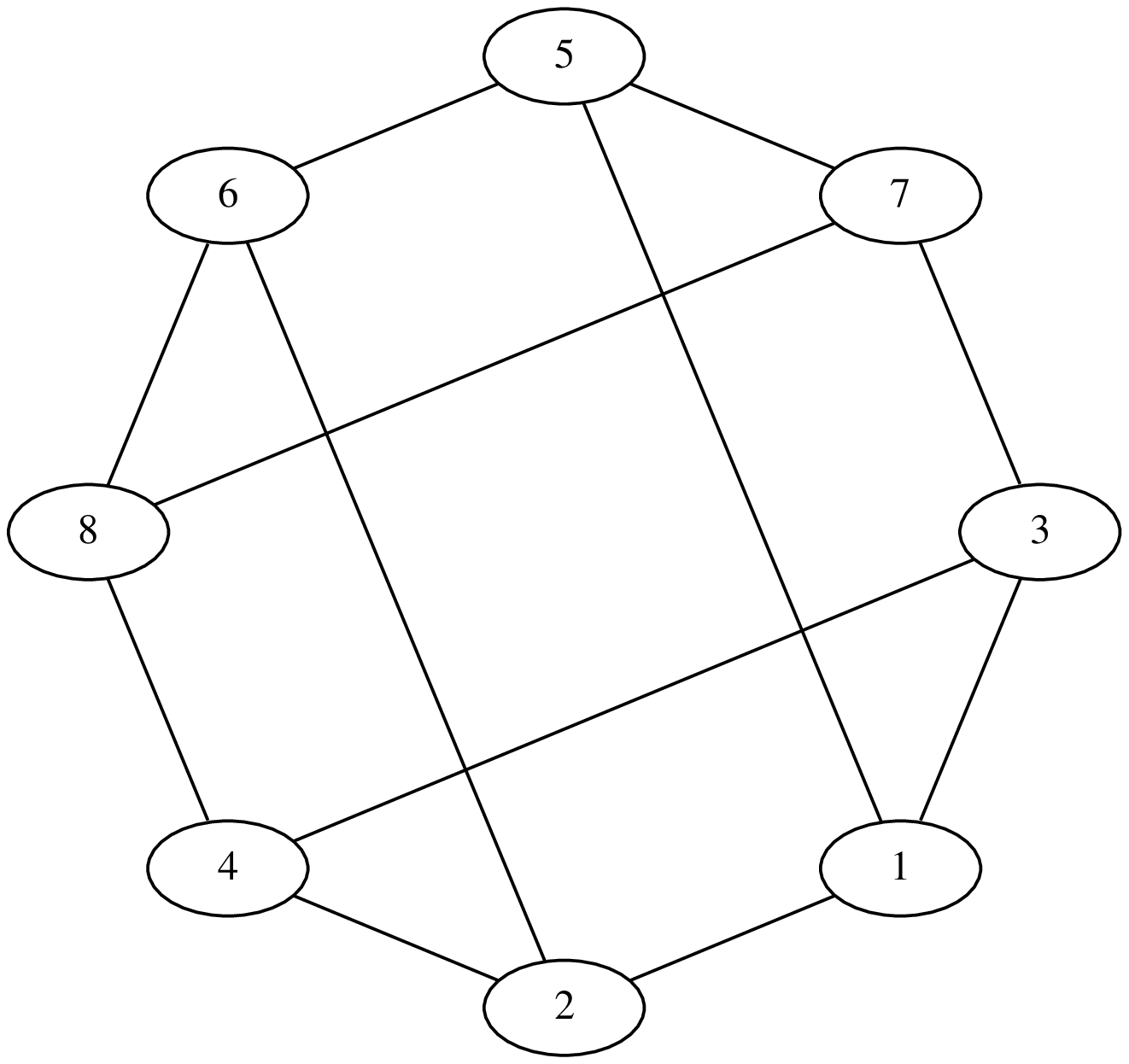} &
\includegraphics[width=2.5in]{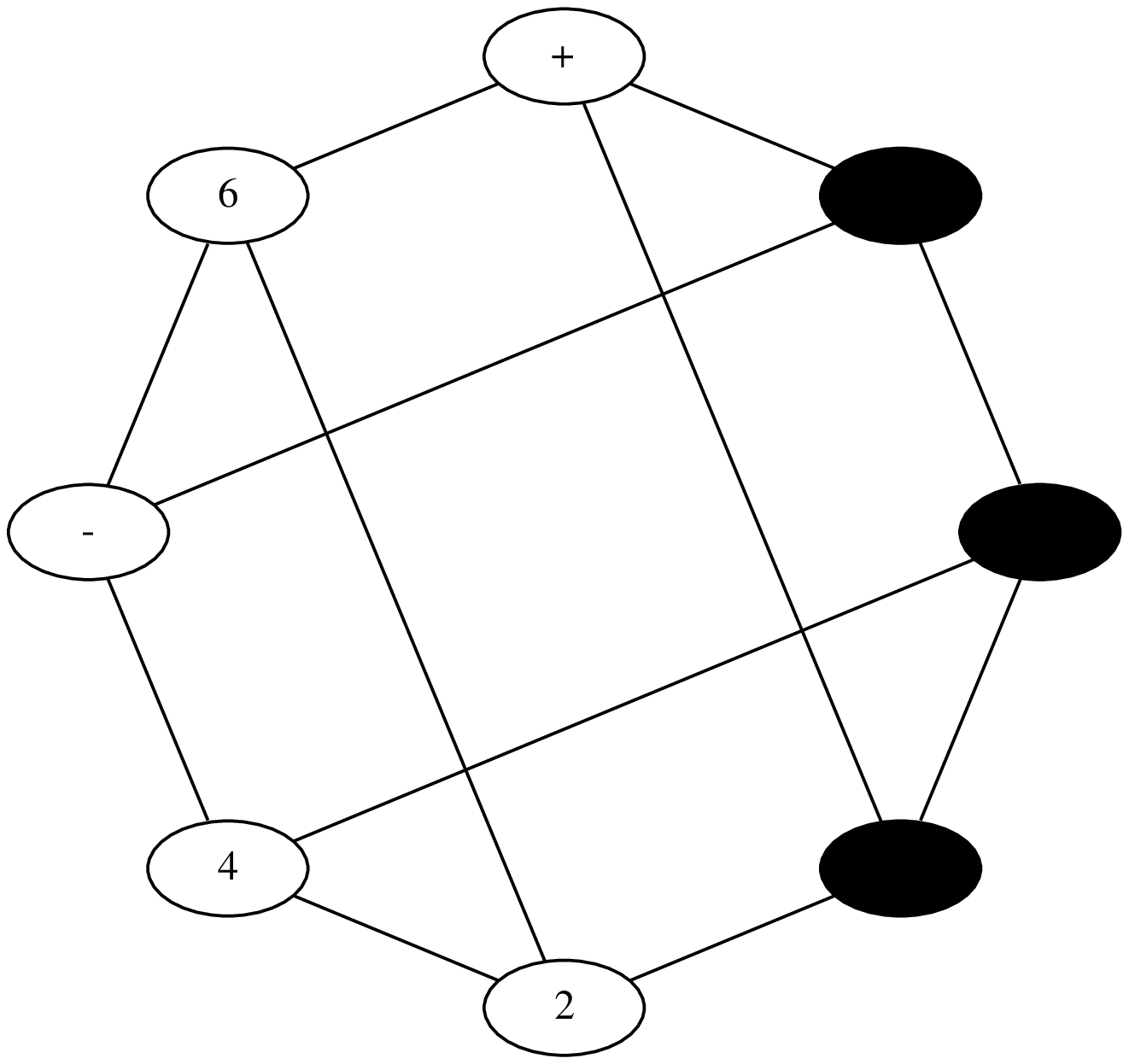}
\end{array}$
\end{center}
\caption{Left - $Q_{3}$, Right - $Q_{3}$ after the first stage of the game}\label{fig:cube3}
\end{figure}

Our next step is to apply Rule \ref{rul:new}(c) to vertex $1$, allowing us to mark vertex $2$ with $-$. We also apply Rule \ref{rul:new}(a) to vertex $3$, allowing us to colour the vertex $4$ black. The state of the game now is pictured in Figure \ref{fig:state2} on the left. 

Now comes the decisive blow - we apply Rule \ref{rul:new}(b) to vertex $4$ and blacken the vertices $2$ and $8$. The resultant state, depicted in Figure \ref{fig:state2} on the right, is such that six vertices out of eight are already black and the remaining two are dispatched easily using Rule \ref{rul:new}(a).

Finally, it is not hard to see that if we start with two vertices,  it is impossible to blacken all vertices. Therefore $Z_{\pm}(Q_{3})=3$.

\begin{figure}[h]
\begin{center}$
\begin{array}{cc}
\includegraphics[width=2.5in]{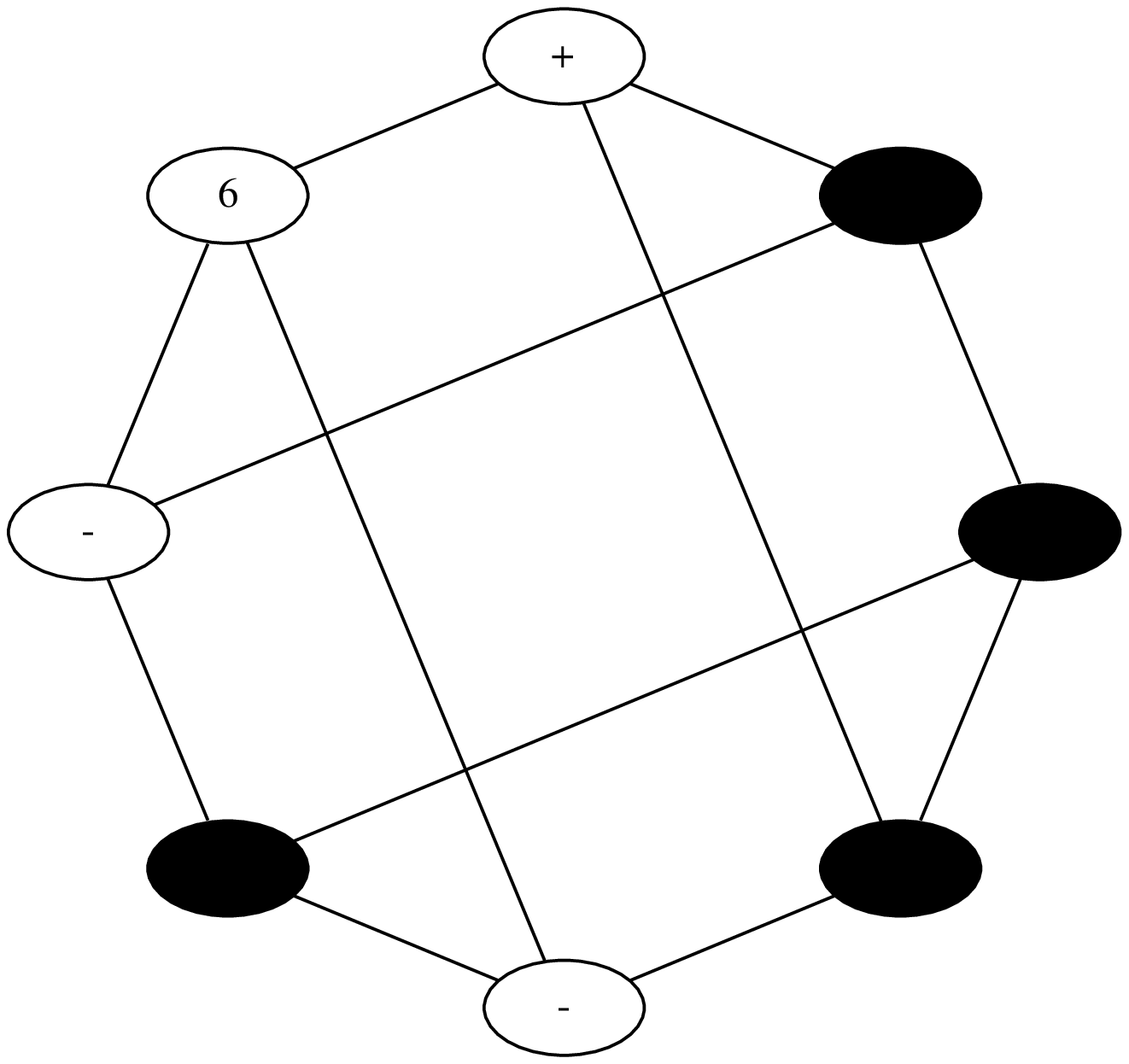} &
\includegraphics[width=2.5in]{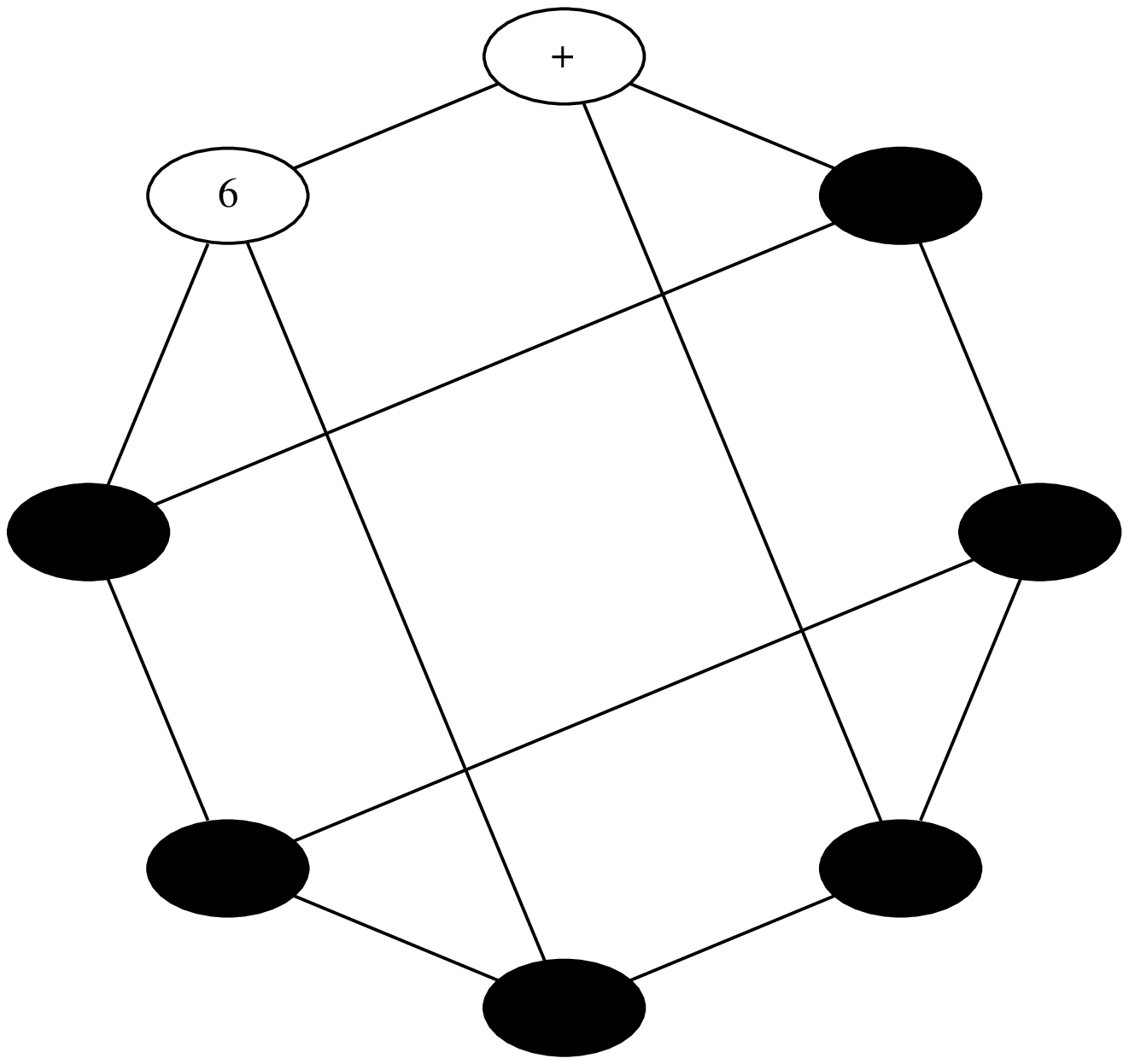}
\end{array}$
\end{center}
\caption{}\label{fig:state2}
\end{figure}

\end{expl}

We can use Example \ref{ex:q3} to give an upper bound on the maximum $\mathcal{Z}$-multiplicity of a hypercube of higher dimensions.

For general hypercubes the following result is known:
\begin{thm}\cite{HuaChaYeh10} $M^{\mathbb{F}}(Q_{d})=Z(Q_{d})=2^{d-1}$, for any field $\mathbb{F}$.
\end{thm}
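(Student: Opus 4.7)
The plan is to establish the chain
$$2^{d-1} \le M^{\mathbb{F}}(Q_d) \le Z(Q_d) \le 2^{d-1},$$
whose middle inequality is Theorem~\ref{thm:classical} and whose two outer inequalities demand separate arguments.

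For the right-hand bound $Z(Q_d) \le 2^{d-1}$, I would use the Cartesian product description $Q_d = Q_{d-1}\,\square\, K_2$ and take as the zero forcing set $S$ one entire face, $S = V(Q_{d-1}) \times \{0\}$, of cardinality $2^{d-1}$. Every $(v,0) \in S$ has a unique neighbour outside $S$, namely its mate $(v,1)$, so a single parallel round of Rule~\ref{rul:zf_classical} blackens the opposite face at once.

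The harder direction is $M^{\mathbb{F}}(Q_d) \ge 2^{d-1}$: I must produce a symmetric matrix over $\mathbb{F}$ with graph $Q_d$ and nullity $2^{d-1}$. I would construct one by induction on $d$, matching the product structure. Assuming inductively that I have a matrix $A_{d-1}$ with graph $Q_{d-1}$ and spectrum $\{+c,-c\}$ of equal multiplicities $2^{d-2}$ (the base case $A_1 = \begin{pmatrix} 0 & c \\ c & 0 \end{pmatrix}$ is plain), I form
$$N_d = \begin{pmatrix} A_{d-1} & cI \\ cI & A_{d-1} \end{pmatrix}.$$
The graph of $N_d$ is exactly $Q_d$. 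The change of basis $(x,y) \mapsto (x+y,\, x-y)$ shows that the eigenvalues of $N_d$ are the numbers $\lambda \pm c$ as $\lambda$ runs over the spectrum of $A_{d-1}$, so $0$ occurs with total multiplicity $2 \cdot 2^{d-2} = 2^{d-1}$. To keep the induction hypothesis alive I would replace the lower-right block by $-A_{d-1}$, yielding a matrix with spectrum $\{\pm c\sqrt{2}\}$ of equal multiplicities $2^{d-1}$; the nullity-producing $N_d$ is then invoked only at the very last step.

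The main obstacle I anticipate is uniformity in the field $\mathbb{F}$: the scalar $c$ picks up an extra factor of $\sqrt{2}$ at each iteration, so the recursion is painless over $\mathbb{R}$ and $\mathbb{C}$ but must be rescaled or replaced over $\mathbb{Q}$ and over finite fields in which $2$ is a nonsquare, and it has to be abandoned outright in characteristic two. The last case is in fact the easiest to dispatch separately: a brief walk-counting argument shows that $A = I + \mathrm{Adj}(Q_d)$ for odd $d$, and $A = \mathrm{Adj}(Q_d)$ for even $d$, satisfies $A^2 = 0$ over $\mathbb{F}_2$, so $\mathrm{rank}\,A \le 2^{d-1}$ and, coupled with the zero forcing upper bound, delivers the required nullity. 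Stitching these field-specific pieces into a single coherent treatment, or replacing them with a uniform polynomial-identity-based construction valid over every $\mathbb{F}$, is where the real effort of the proof would lie.
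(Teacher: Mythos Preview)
The paper does not prove this theorem at all: it is quoted as a known result from \cite{HuaChaYeh10}, with no argument given in the present text. The only fragment the paper discusses is the upper bound $Z(Q_d)\le 2^{d-1}$, which it attributes to \cite[Proposition~2.5]{AIM_zeroforce08}; that argument uses the Cartesian-product structure $Q_d=Q_{d-1}\,\square\,K_2$ exactly as you do, so on the one piece where a comparison is possible your approach coincides with the cited one.

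Since there is no in-paper proof to compare against, let me instead comment briefly on your sketch. The chain $2^{d-1}\le M^{\mathbb{F}}(Q_d)\le Z(Q_d)\le 2^{d-1}$ is the right shape, and your zero forcing set and your real/complex nullity construction via the block matrices $N_d$ are both sound. Your characteristic-two argument is also correct: over $\mathbb{F}_2$ one has $\mathrm{Adj}(Q_d)^2=dI$ because two vertices at Hamming distance~$2$ have exactly two common neighbours, so the matrix you exhibit indeed squares to zero and has rank at most $2^{d-1}$. The honest gap you identify---fields of odd characteristic, or $\mathbb{Q}$, where $\sqrt{2}$ is unavailable---is real. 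One way to close it uniformly (and this is essentially how the cited literature proceeds) is to avoid eigenvalues altogether and work with tensor products: writing $Q_d=K_2^{\,\square\, d}$, the matrix $B=\sum_{i=1}^{d} I\otimes\cdots\otimes X\otimes\cdots\otimes I$ with $X=\begin{pmatrix}0&1\\1&0\end{pmatrix}$ in the $i$th slot has graph $Q_d$, and an explicit $2^{d-1}$-dimensional kernel can be written down over any field by pairing up the tensor factors (for even $d$) or by a small diagonal shift (for odd $d$), with characteristic~$2$ handled separately as you already did. So your proposal is a correct outline whose self-diagnosed obstacle is genuine but surmountable.
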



The argument for the upper bound $Z(Q_{d}) \leq 2^{d-1}$ had been given in \cite[Proposition 2.5]{AIM_zeroforce08} where it was shown that if $G \square H$ is a Cartesian graph product and $S$ is a classical zero forcing set for $G$, then by taking a copy of $S$ in each fiber of $G$ inside $G \square H$, we obtain a classical zero forcing set for $G \square H$.

\emph{Mutatis mutandis} the same argument works for the signed zero forcing game. 
Therefore, we have:

\begin{thm}
For all $d \geq 3$, $M^{\mathbb{R}}_{\mathcal{Z}}(Q_{d}) \leq Z_{\pm}(Q_{d}) \leq 3 \cdot 2^{d-3}$.
\end{thm}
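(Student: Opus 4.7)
The first inequality $M^{\mathbb{R}}_{\mathcal{Z}}(Q_d) \leq Z_{\pm}(Q_d)$ is immediate from Theorem \ref{thm:maing} applied to $G = Q_d$, and so the real content of the theorem is the upper bound $Z_{\pm}(Q_d) \leq 3 \cdot 2^{d-3}$.

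The plan is a direct combination of Example \ref{ex:q3} with the Cartesian-product transfer principle recorded (as the signed analogue of \cite[Proposition 2.5]{AIM_zeroforce08}) in the paragraph immediately preceding the theorem: if $S$ is a signed zero forcing set for $G$ and $H$ is any graph, then $S \times V(H)$ is a signed zero forcing set for $G \square H$. Since $Q_d \cong Q_3 \square Q_{d-3}$, taking $S = \{1,3,7\} \subseteq V(Q_3)$ (the set of size $3$ that certifies $Z_{\pm}(Q_3) \leq 3$ in Example \ref{ex:q3}) and applying the principle with $G = Q_3$ and $H = Q_{d-3}$ yields a signed zero forcing set for $Q_d$ of size $|S| \cdot |V(Q_{d-3})| = 3 \cdot 2^{d-3}$, which is exactly the desired bound.

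To make the application completely self-contained, I would spell out the signed version of the transfer principle, since the paper invokes it only with a \emph{mutatis mutandis} gesture. The idea is to replicate the canonical $G$-game that certifies $Z_{\pm}(G) \leq |S|$ synchronously in every fiber $V(G) \times \{h\}$ of $G \square H$, keeping the invariant that the colour and marker of a vertex $(g, h)$ depend only on $g$. Under the invariant the cross-fiber neighbours of any chosen $u = (g, h_0)$ share the state of $u$ itself; in particular, when $u$ is black every cross-fiber copy $(g, h')$ with $h' \sim h_0$ is also black and therefore contributes nothing to the set $W$ used in Rule \ref{rul:new}, so the in-fiber move transfers verbatim to $G \square H$ and has exactly the intended in-fiber effect.

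The main bookkeeping obstacle is the global precondition of the seeding clause Rule \ref{rul:new}(d), namely that no white vertex anywhere in the graph is marked before a fresh seeding is attempted. I would handle this by scheduling each seeding together with the chain of moves it triggers as a single atomic round, executed one fiber at a time: within a fiber we perform the seeding and follow it by the downstream applications of Rule \ref{rul:new}(a)--(c) that the canonical $G$-game prescribes, so that the seeded marker is absorbed into blackness before we turn to the next fiber, and only then do we repeat the same round in the next fiber. This keeps the seeding precondition satisfied at every step, restores the lock-step invariant between rounds, and so lets the product game terminate in the same number of rounds as the $G$-game, forcing all of $Q_d$ from the set $S \times V(Q_{d-3})$.
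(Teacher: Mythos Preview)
Your approach is exactly the paper's own: Theorem~\ref{thm:maing} gives the first inequality, and the second comes from the signed analogue of the Cartesian-product transfer principle (which the paper asserts only with a \emph{mutatis mutandis} gesture) combined with $Z_{\pm}(Q_3)=3$ from Example~\ref{ex:q3} and the decomposition $Q_d\cong Q_3\,\square\,Q_{d-3}$.

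Since you go beyond the paper in trying to spell out the transfer, one remark on your per-fiber scheduling. During the processing of a single fiber $h_1$ the lock-step invariant is temporarily broken, so your earlier justification that cross-fiber copies of a black $u=(g,h_1)$ are also black no longer applies to vertices $g$ that were blackened \emph{within} the current round. In the $Q_3$ game of Example~\ref{ex:q3} as written, Rule~\ref{rul:new}(b) is applied at vertex $4$, which was itself blackened during the same round by Rule~\ref{rul:new}(a) at vertex $3$; in the product game, when you try to apply clause~(b) at $(4,h_1)$, the cross-fiber copies $(4,h')$ with $h'\sim h_1$ are still white and unmarked, so they land in $W_*$ and the hypothesis $W=W_-$ fails. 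The easy repair is to reorder the $Q_3$ game so that the classical move at vertex $3$ (blackening $4$) is performed in a lock-step phase \emph{before} any seeding; then every rule application in the per-fiber round is at a vertex already black in every fiber, the moves transfer verbatim, all markers are absorbed as you describe, and a final lock-step pass of clause~(a) handles the remaining vertex~$6$.
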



\section{Graphs with $Z_{\pm}(G)<Z(G)$}
Since Rule \ref{rul:new}(a) is equivalent to Rule \ref{rul:zf_classical}, it is always true that $Z_{\pm}(G) \leq Z(G)$. As we have seen for hypercubes, strict inequality may obtain, in which case $Z_{\pm}$ provides substantially new information over that already given by $Z$. In this section we will show another family of graphs for which $Z_{\pm} <Z$ and also report the result of a computer search over small graphs.

But our first result is a negative one - for trees the two parameters coincide:
\begin{thm}\label{thm:trees}
Let $T$ be a tree. Then $Z_{\pm}(T)=Z(T)$.
\end{thm}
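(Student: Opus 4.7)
The plan is to establish the reverse inequality $Z_\pm(T)\ge Z(T)$; the opposite direction $Z_\pm(T)\le Z(T)$ is immediate because clause~(a) of Rule~\ref{rul:new} subsumes Rule~\ref{rul:zf_classical}. My strategy is to sandwich $Z_\pm(T)$ inside the chain
$$Z(T) \;=\; M^{\mathbb{R}}(T) \;=\; M^{\mathbb{R}}_{\mathcal{Z}}(T) \;\le\; Z_\pm(T) \;\le\; Z(T),$$
which forces all terms to coincide. The rightmost inequality is Theorem~\ref{thm:maing} specialised to $G=T$, and the leftmost equality is the classical result (Johnson--Leal Duarte combined with the AIM group) that for any tree the maximum real nullity coincides with the path cover number and with the classical zero forcing number; I will simply quote this from the literature.

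The real content of the proof is therefore the middle equality $M^{\mathbb{R}}(T)=M^{\mathbb{R}}_{\mathcal{Z}}(T)$. The inequality $M^{\mathbb{R}}_{\mathcal{Z}}(T)\le M^{\mathbb{R}}(T)$ is trivial because $\mathcal{Z}$ is a subfamily of the symmetric matrices. For the reverse inequality, given an arbitrary symmetric matrix $A$ whose graph is $T$, I will construct a diagonal signature matrix $D=\mathrm{diag}(\pm 1)$ such that every nonzero off-diagonal entry of $DAD$ is negative. Root $T$ at an arbitrary vertex $r$, set $D_{rr}=1$, and then recursively, for each non-root vertex $v$ with parent $p(v)$, set $D_{vv} := -\,D_{p(v),p(v)}\,\mathrm{sign}(A_{v,p(v)})$, so that $(DAD)_{v,p(v)}=-|A_{v,p(v)}|<0$ for every tree edge. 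Because $D^2=I$, the matrix $DAD$ is similar to $A$ and hence has the same nullity; it manifestly has the same graph $T$; and by construction it lies in $\mathcal{Z}$. Maximising the nullity over $A$ yields $M^{\mathbb{R}}_{\mathcal{Z}}(T)\ge M^{\mathbb{R}}(T)$, finishing the middle equality and hence the whole theorem.

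The only subtle point --- and the place where the tree hypothesis is actually used --- is the consistency of the signature construction. On a graph containing a cycle, one would pick up a product constraint on $\prod_{e}\mathrm{sign}(A_e)$ around the cycle that an arbitrary $A$ need not satisfy; on a tree, acyclicity removes all such constraints and the recursion proceeds without conflict. This same obstruction conceptually explains why signed zero forcing can beat classical zero forcing only on graphs with cycles, consistent with the hypercube examples of Section~\ref{sec:graphs}.
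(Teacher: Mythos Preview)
Your proof is correct and follows essentially the same sandwich argument as the paper's: both use the chain $M_{\mathcal{Z}}(T)\le Z_\pm(T)\le Z(T)=M(T)=M_{\mathcal{Z}}(T)$, with the key equality $M_{\mathcal{Z}}(T)=M(T)$ established via a diagonal signature congruence (which you spell out explicitly, while the paper merely cites it). One minor labeling slip: in your displayed chain the inequality furnished by Theorem~\ref{thm:maing} is $M^{\mathbb{R}}_{\mathcal{Z}}(T)\le Z_\pm(T)$, not the rightmost one---the rightmost inequality $Z_\pm(T)\le Z(T)$ is the trivial one you already dispatched in your first paragraph.
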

\begin{proof}
It is a well-known fact (cf. \cite[Lemma 1.2]{Alba_etal06}) that any symmetric sign pattern is congruent via a positive diagonal matrix to a pattern all of whose nonzero off-diagonal entries are $+$. This implies that $M_{\mathcal{Z}}(T)=M(T)$. It is also known \cite[Proposition 4.2]{AIM_zeroforce08} that $M(T)=Z(T)$ for any tree $T$. 

Therefore we can write:
$$M_{\mathcal{Z}}(T) \leq Z_{\pm}(T) \leq M(T)=M_{\mathcal{Z}}(T)$$
and equality must hold throughout.
\end{proof}

\subsection{Small graphs}
In order to find the smallest order of a graph with this property we ran a computer search over the catalogue of small graphs made publicly available by Brendan McKay \cite{McKayGraphsData}. We found out that the smallest order is $6$ and that there are exactly two graphs with $Z_{\pm}<Z$ on six vertices; for both of them $Z_{\pm}=3,Z=4$. They are shown in Figure \ref{fig:min}.

One of the referees has pointed out that both these graphs have also the properties $M=4$ and $M_{\mathcal{Z}}=3$. While $M=4$ follows for both from \cite[Proposition 4.3]{AIM_zeroforce08}, to see that $M_{\mathcal{Z}}=3$ for, say, the graph on the right in Figure \ref{fig:min}, the referee has suggested the following argument: a matrix from $\mathcal{Z}$ associated to it must be of the form
$$
\left(\begin{array}{ccccccc} 
d_{1}&0&a&b&c&d\\ 
0&d_{2}&s&t&u&v\\
a&s&d_{3}&0&w&x\\
b&t&0&d_{4}&y&z\\
c&u&w&y&d_{5}&0\\
d&v&x&z&0&d_{6}
\end{array}\right),
$$
with all the off-diagonal variables being strictly positive. The minor formed on rows $\{1,3,5\}$ and on columns $\{2,4,6\}$ is:
$$
\left|\begin{array}{ccc}
0&b&d\\
s&0&x\\
u&y&0
\end{array}\right|=bx+dy>0.
$$
Thus we have found a $3 \times 3$ nonzero minor and so $M_{\mathcal{Z}}(G) \geq 3$. As $Z_{\pm}(G)=3$ we deduce that in fact $M_{\mathcal{Z}}(G)=3$.

\begin{figure}[h]
\begin{center}$
\begin{array}{cc}
\includegraphics[width=2.5in]{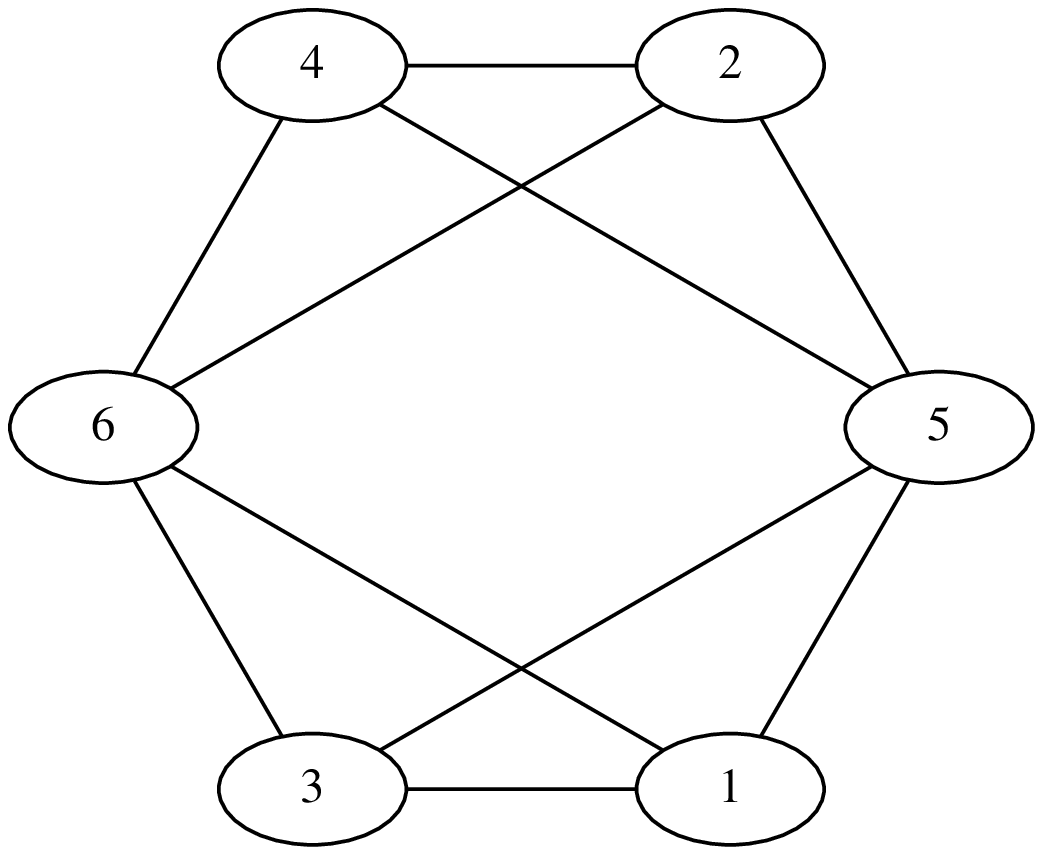} &
\includegraphics[width=2.5in]{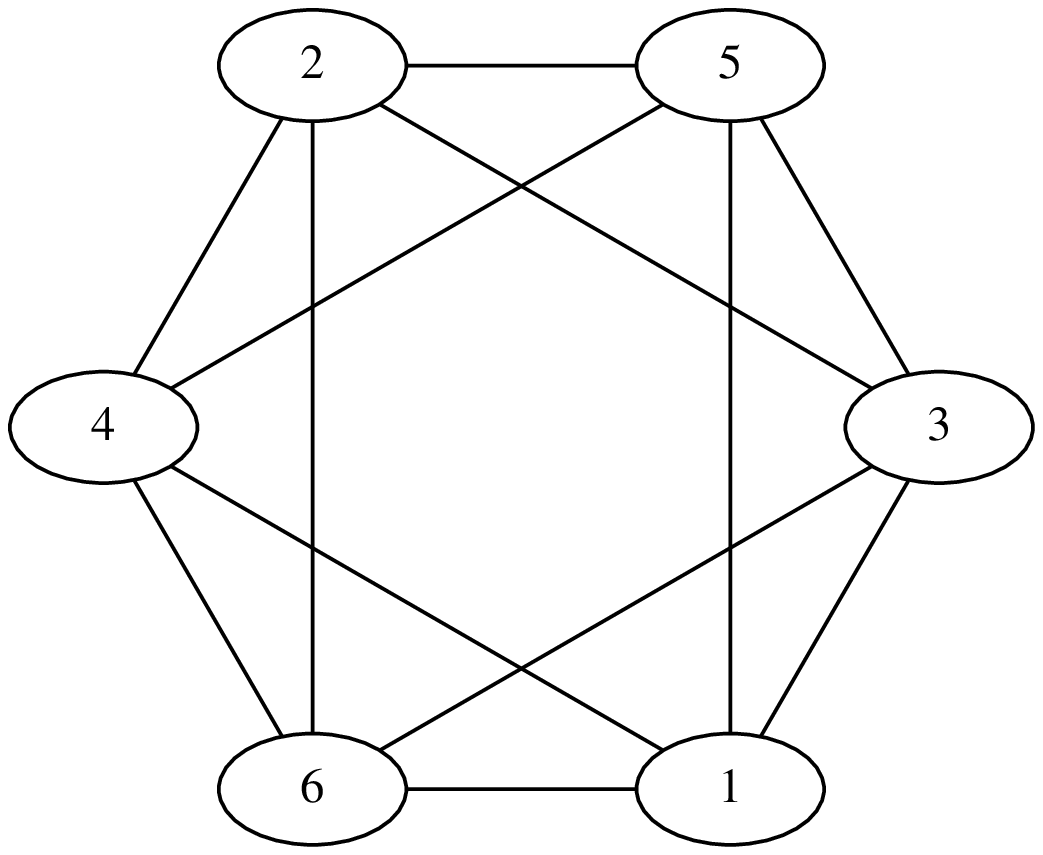}
\end{array}$
\end{center}
\caption{The smallest graphs for which $Z_{\pm}<Z$}\label{fig:min}
\end{figure}

The graph we have just considered is in fact a well-known one in a slight disguise, for it is isomorphic to the line graph $L(K_{4})$ of the clique on four vertices. Using a brute-force implementation we find that if $G=L(K_{5})$ then $Z(G)=7$ and $Z_{\pm}(G)=6$. In general the classical zero forcing number of $L(K_{n})$ is given by:
\begin{thm}\cite{EroKanYi12,AIM_zeroforce08}
For any $n \geq 4$, it holds that $$M(L(K_{n}))=Z(L(K_{n}))=\binom{n}{2}-(n-2).$$
\end{thm}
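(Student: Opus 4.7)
By Theorem~\ref{thm:classical} we already have $M^{\mathbb{R}}(L(K_n))\le Z(L(K_n))$, so the stated double equality reduces to the two matching one-sided bounds
\[
Z(L(K_n))\le\binom{n}{2}-(n-2)\quad\text{and}\quad M^{\mathbb{R}}(L(K_n))\ge\binom{n}{2}-(n-2).
\]
Throughout I identify the vertices of $L(K_n)$ with the $\binom{n}{2}$ unordered pairs $\{i,j\}\subseteq\{1,\ldots,n\}$, with adjacency meaning a shared element.

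For the zero-forcing upper bound the plan is to exhibit the explicit forcing set
\[
S \;=\; \bigl\{\{i,j\}: 1\le i<j\le n-1\bigr\}\,\cup\,\bigl\{\{n-1,n\}\bigr\},
\]
the edges of the subclique $K_{n-1}$ on $\{1,\ldots,n-1\}$ together with one extra edge at vertex $n$, so that $|S|=\binom{n-1}{2}+1=\binom{n}{2}-(n-2)$. The only white vertices are the $n-2$ pairs $\{k,n\}$ with $k\in\{1,\ldots,n-2\}$. For each such $k$, the black vertex $\{k,n-1\}\in S$ has all of its neighbours in $L(K_n)$ already in $S$ except $\{k,n\}$: a neighbour is an edge sharing either $k$ or $n-1$ with some other vertex, and each such edge either lies in $E(K_{n-1})$, or equals $\{n-1,n\}$, or equals the white $\{k,n\}$. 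Thus Rule~\ref{rul:zf_classical} forces $\{k,n\}$ black, and iterating over $k$ colours everything.

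For the maximum-nullity lower bound one must produce a real symmetric matrix $A$ of size $\binom{n}{2}$, indexed by edges of $K_n$, whose off-diagonal zero-nonzero pattern agrees with $L(K_n)$ (zero on disjoint pairs, nonzero on intersecting ones) and whose rank is $n-2$. A natural template is a Gram-type factorisation $A=W^{\top}DW$ with $W$ an $(n-2)\times\binom{n}{2}$ matrix and $D$ a diagonal sign matrix, which gives $\operatorname{rank}A\le n-2$ for free; the combinatorial core is to pick the columns of $W$ (one per edge) so that two columns are $D$-orthogonal exactly when the corresponding edges are disjoint. A useful warm-up is $n=4$: there $L(K_4)\cong K_{2,2,2}$ and the rank-$2$ matrix $A=u\mathbf{1}^{\top}+\mathbf{1}u^{\top}$, with $u$ taking opposite values on each complementary edge pair of $K_4$, already has the required zero pattern.

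The main obstacle is producing such an $A$ for arbitrary $n$: the disjoint-pair vanishing constraints are numerous and entangled, so a bare-hands generic choice of $W$ will not work. I would parametrise $A$ using the Johnson-scheme structure of $L(K_n)=J(n,2)$ and perturb carefully within the Bose--Mesner algebra of this strongly regular graph, or else invoke the established identity $mr(L(K_n))=n-2$ already present in the minimum-rank literature. Once any such $A$ is exhibited, the two matching inequalities pin both $M^{\mathbb{R}}(L(K_n))$ and $Z(L(K_n))$ to the common value $\binom{n}{2}-(n-2)$.
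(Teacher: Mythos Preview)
The paper does not prove this theorem at all: it is quoted as a known result with citations to \cite{EroKanYi12,AIM_zeroforce08} and then used as input for the subsequent computation of $Z_{\pm}(L(K_n))$. So there is no proof in the paper to compare your proposal against.

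On the merits of what you wrote: your zero-forcing upper bound is correct. The set $S$ consisting of all edges inside $\{1,\ldots,n-1\}$ together with $\{n-1,n\}$ has the right size, and your check that each black vertex $\{k,n-1\}$ (for $k\le n-2$) has $\{k,n\}$ as its unique white neighbour is accurate, so Rule~\ref{rul:zf_classical} fires and all $n-2$ white vertices get coloured. This half of the argument is complete.

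The lower bound on $M^{\mathbb{R}}(L(K_n))$, however, is only a plan, as you yourself acknowledge. The rank-$2$ construction for $n=4$ via $u\mathbf{1}^{\top}+\mathbf{1}u^{\top}$ is fine, but it does not generalise: for $n\ge 5$ you need a rank-$(n-2)$ matrix, and neither the Gram ansatz $W^{\top}DW$ nor a perturbation within the Bose--Mesner algebra of $J(n,2)$ is carried out. The eigenspaces of the Johnson scheme have dimensions $1$, $n-1$, and $\binom{n}{2}-n$, so no linear combination of the idempotents alone yields nullity $\binom{n}{2}-(n-2)$; a genuinely asymmetric construction is needed. Your fallback of invoking the literature is exactly what the paper does, so if that is the route you take you are not proving anything beyond what is already being cited. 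To make the proposal a self-contained proof you would need to actually exhibit, for every $n\ge 4$, a symmetric matrix with the off-diagonal pattern of $L(K_n)$ and rank $n-2$; this is the substantive content of the cited references and is not supplied here.
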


In the next subsection we are going to bound $M_{\mathcal{Z}}(L(K_{n})$ from above by computing $Z_{\pm}$. But before we do that we would like to find a lower bound that will coincide with the value of $Z_{\pm}$. 

\subsection{Line graphs of cliques}  
Recall that a \emph{clique cover} of a graph $G$ is a set $C_{1},C_{2},\ldots,C_{m}$ of cliques in $G$ such that every edge of $G$ belongs to at least one $C_{i}$. The minimum cardinality of a clique cover is called the \emph{clique-cover number} and denoted \emph{$cc(G)$}.

\begin{thm}\label{thm:cc}
For any graph $G$ on $n$ vertices it holds that $$M_{\mathcal{Z}}(G) \geq n-cc(G).$$
\end{thm}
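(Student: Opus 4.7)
The plan is to exhibit an explicit matrix $A \in \mathcal{Z}$ whose graph is $G$ and whose rank is at most $cc(G)$, which will immediately yield nullity at least $n - cc(G)$.

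Let $C_1, C_2, \ldots, C_m$ be a minimum clique cover of $G$, where $m = cc(G)$. For each $i$ let $v_i \in \mathbb{R}^n$ denote the characteristic (indicator) vector of the vertex set $C_i$: that is, $(v_i)_j = 1$ if $j \in C_i$ and $0$ otherwise. Then I would define
$$
A \;=\; \sum_{i=1}^{m} v_i v_i^T .
$$
This is a real symmetric positive semidefinite matrix. Because it is written as a sum of $m$ rank-one matrices, $\operatorname{rank}(A) \leq m$, so the nullity of $A$ is at least $n - m = n - cc(G)$.

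Next I would verify that $A$ belongs to $\mathcal{Z}$ and that its graph is exactly $G$. For $j \neq k$ the entry $A_{jk} = \sum_{i=1}^m (v_i)_j (v_i)_k$ counts the number of cliques $C_i$ that contain both $j$ and $k$, so $A_{jk} \geq 0$ for every pair; this ensures $A \in \mathcal{Z}$. Moreover, $A_{jk} > 0$ if and only if some $C_i$ contains both $j$ and $k$, which happens if and only if $jk$ is an edge of $G$: on the one hand, any two vertices inside a common $C_i$ are adjacent (since $C_i$ is a clique in $G$), and on the other hand every edge of $G$ lies in some $C_i$ by the clique-cover property. Hence the off-diagonal zero-nonzero pattern of $A$ matches the adjacency of $G$, and the graph of $A$ is $G$.

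Combining these two observations, $A$ is a matrix in $\mathcal{Z}$ with graph $G$ and nullity at least $n - cc(G)$, which establishes the inequality $M_{\mathcal{Z}}(G) \geq n - cc(G)$. The only minor wrinkle worth checking is the convention regarding isolated vertices (which need not be covered by any clique since the cover is only required to cover edges): for such a vertex $j$ one simply has $A_{jj} = 0$ and $A_{jk} = 0$ for all $k \neq j$, which is perfectly consistent with the graph-of-a-matrix convention used throughout the paper, since diagonal entries are unconstrained. There is no real obstacle in the argument; the essential content is just the well-known identity $\operatorname{rank}\bigl(\sum v_i v_i^T\bigr) \leq m$ combined with the defining property of a clique cover.
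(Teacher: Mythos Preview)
Your proof is correct and follows exactly the same approach as the paper: construct $A=\sum_{i=1}^{m} v_i v_i^{T}$ from the indicator vectors of a minimum clique cover, observe that its rank is at most $m=cc(G)$, and that its graph is $G$. You in fact supply a bit more detail than the paper does (explicitly checking $A\in\mathcal{Z}$ and handling isolated vertices), but the argument is the same.
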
 
\begin{proof}
Let $m=cc(G)$ and let $C_{1},C_{2},\ldots,C_{m}$ be a minimum clique cover of $G$. For every $1 \leq i \leq m$ let $v_{i}$ be the indicator vector of $C_{i}$, i.e. the entries of $v_{i}$ corresponding to vertices in $C_{i}$ are equal to $1$ and all the other entries are zero. Now define $$A=\sum_{i=1}^{r}{v_{i}v_{i}^{T}}.$$

Clearly, the graph of $A$ is $G$ and $r(A) \leq cc(G)$. Therefore $M(G) \geq n-r(A) \geq n-cc(G)$.
\end{proof}

\begin{thm}\label{thm:lkn}
For $n \geq 6$, let $G=L(K_{n})$. Then it holds that $$M_{\mathcal{Z}}(G)=Z_{\pm}(G)=\binom{n}{2}-n.$$
\end{thm}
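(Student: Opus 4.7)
The plan is to sandwich $M_{\mathcal{Z}}(L(K_n))$ between matching lower and upper bounds both equal to $\binom{n}{2} - n$, using Theorem~\ref{thm:maing} ($M_{\mathcal{Z}} \leq Z_\pm$) and Theorem~\ref{thm:cc} ($M_{\mathcal{Z}} \geq |V| - cc$).

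For the lower bound, observe that the $n$ stars $S_v = \{e \in E(K_n) : v \in e\}$, for $v \in V(K_n)$, are cliques in $L(K_n)$ and jointly cover every edge of $L(K_n)$: any two incident edges of $K_n$ share a vertex $v$ and hence both belong to $S_v$. Thus $cc(L(K_n)) \leq n$, and Theorem~\ref{thm:cc} yields $M_{\mathcal{Z}}(L(K_n)) \geq \binom{n}{2} - n$.

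For the upper bound I would exhibit an explicit signed zero forcing set of size $\binom{n}{2} - n$ by specifying its $n$-vertex white complement. Labelling the vertices of $K_n$ as $0, 1, \ldots, n-1$, take $W = \{\{0, j\} : 1 \leq j \leq n-1\} \cup \{\{n-2, n-1\}\}$, i.e.\ the star at vertex $0$ together with the extra edge $\{n-2, n-1\}$, and play the game in two phases. In \emph{Phase I}: Rule~\ref{rul:new}(d) seeds $\{0, 1\}$ with $+$; for each $k \in \{2, \ldots, n-3\}$ the black chord $\{1, k\}$ has precisely $\{0, 1\}$ and $\{0, k\}$ as its white neighbors (since $\{n-2, n-1\}$ is vertex-disjoint from $\{1, k\}$ when $k \leq n-3$), so Rule~\ref{rul:new}(c) marks $\{0, k\}$ with $-$; next, any intra-range chord $\{j, k\} \subseteq \{2, \ldots, n-3\}$, which exists precisely because $n \geq 6$, invokes Rule~\ref{rul:new}(b) to blacken the newly-marked vertices (iteration over such chords handles the whole range), and Rule~\ref{rul:new}(a) on $\{1, 2\}$ blackens $\{0, 1\}$; at this point exactly $\{0, n-2\}, \{0, n-1\}, \{n-2, n-1\}$ remain white, with no surviving markers. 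In \emph{Phase II}: Rule~\ref{rul:new}(d) reseeds $\{n-2, n-1\}$ with $+$; Rule~\ref{rul:new}(c) applied to the chords $\{2, n-2\}$ and $\{2, n-1\}$ marks $\{0, n-2\}$ and $\{0, n-1\}$ with $-$; Rule~\ref{rul:new}(b) on the (now black) chord $\{0, 2\}$ blackens these two; and finally Rule~\ref{rul:new}(a) on $\{0, n-2\}$ finishes off $\{n-2, n-1\}$.

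The main obstacle is the careful bookkeeping of white neighborhoods at each step, in particular verifying that every invocation of Rule~\ref{rul:new}(c) really sees exactly one unmarked white vertex among its white neighborhood (so that the marking is legal). The hypothesis $n \geq 6$ enters only in Phase I through the requirement that $\{2, \ldots, n-3\}$ contain two distinct indices, consistent with the paper's explicit computations $Z_\pm(L(K_4)) = 3$ and $Z_\pm(L(K_5)) = 6$, both of which exceed $\binom{n}{2} - n$ in those cases.
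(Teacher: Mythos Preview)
Your proposal is correct and follows essentially the same approach as the paper: the lower bound via the clique cover by the $n$ vertex-stars is identical, and for the upper bound your white set (the star at one distinguished vertex together with one extra edge) is exactly the paper's set $A\cup B$ under the relabeling $0\leftrightarrow n$. Your two-phase game play mirrors the paper's almost move for move; the only cosmetic difference is that after the initial round of markings the paper blackens $\{0,2\},\{0,3\}$ via Rule~\ref{rul:new}(b) on $\{2,3\}$ and then uses Rule~\ref{rul:new}(a) on $\{2,j\}$ for $4\le j\le n-3$, whereas you phrase this as iterating Rule~\ref{rul:new}(b) over intra-range chords---both work since each such chord sees only marked white neighbors.
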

\begin{proof}
The vertices of $G$ can be labelled as $(i,j)$ with $i,j \in \{1,2,\ldots,n\}$ and $i<j$. Define $$S=V(G) \setminus (A \cup B),$$
where
$$
A=\{(i,n)|1 \leq i \leq n-3\}, $$ $$B=\{(n-2,n-1),(n-2,n),(n-1,n)\}.$$

We claim that $S$ is a signed zero forcing set for $G$. To see this, let us start the game by colouring all vertices in $S$ black. We first seed vertex $(1,n)$ with a $+$ and then apply Rule \ref{rul:new}(c) to vertex $(1,2)$ which allows us to mark $(2,n)$ with $-$.
Now apply Rule \ref{rul:new}(c) to all vertices of the form $(1,j),j=3,4,\ldots,n-3$. Each one of them has two white neighbours: $(1,n)$ and $(j,n)$ and since $(1,n)$ is marked with $+$, we can mark $(j,n)$ with $-$.

Next we apply Rule \ref{rul:new}(b) to vertex $(2,3)$: its two white neighbours are $(2,n)$ and $(2,3)$, both marked $-$. Therefore we can blacken both of these two vertices. At this stage vertex $(1,2)$ has only one white neighbour, that is $(1,n)$ and therefore \ref{rul:new}(a), the classical clause, allows us to blacken $(1,n)$. Similarly, applying Rule \ref{rul:new}(a) to vertex $(2,j)$ for $4 \leq j \leq n-3$ allows us to blacken the remaining white vertices of $A$. 

At the end of this process, we are left with only three white vertices - the vertices of $B$. The game can now be finished by seeding again from vertex $(n-3,n-2)$ which marks $(n-2,n-1)$ with $+$ and $(n-2,n)$ with $-$. Applying Rule \ref{rul:new}(c) to $(n-3,n-1)$ we can mark $(n-1,n)$ with $-$. Then Rule \ref{rul:new}(b) to $(n-2,n)$ allows us to blacken $(n-2,n-1)$ and $(n-2,n)$. Thus only $(n-2,n)$ is left white and there is any number of possible ways to deliver the \emph{coup de grace} to it. We have thus proved that
$$
Z_{\pm}(G) \leq |S|=\binom{n}{2}-n.
$$

On the other hand, if $H$ is any graph on $n$ vertices and $G=L(H)$ is its line graph, then $cc(G) \leq n$ since $G$ is clearly covered by the set of $n$ cliques of the form $C_{v}=\{e \in E(G)| v \in e\}$ for $v \in V(G)$. Therefore by Theorem \ref{thm:cc} we have:
$$
M_{\mathcal{Z}}(G) \geq \binom{n}{2}-n.
$$
An appeal to Theorem \ref{thm:main} finishes the proof.
\end{proof}

 

\section{Concluding remarks}\label{sec:remarks}

\begin{enumerate}

\item


Suppose now that the derived coloring of a set $S$ is not all black, but all white vertices have sign markers. If we somehow know in addition that $0$ is a low eigenvalue of $G$, we might be able to rule out the putative configuration of signs of the entries of $x \in \ker{A}$ resulting from our game by applying a suitable \emph{nodal domain theorem} (cf. \cite{DavGlaLeySta01}). 

This idea has been used in \cite[Section 5.3]{Mythesis} for the Colin de Verdi\`{e}re number $\mu(G)$ (in which case $0$ is stipulated to be the second lowest eigenvalue of $G$), with the help of a nodal domain-type result for $\mu$ due to van der Holst, Lov\'{a}sz and Schrijver.

\begin{qstn}
Is it possible to find a non-trivial class $\Gamma$ of graphs for which $0$ is guaranteed to be a low eigenvalue of any matrix in $\mathcal{Z}$ whose graph is $G \in \Gamma$? 
\end{qstn}

\item
Like the classical variety, our new game requires the sign pattern to have a decent helping of zeros to work well. The development of a method that works well for so-called \emph{full sign patterns} (i.e. without zeros) is still an open question. To try to reduce it to the sparse case, we pose a question:

\emph{Is it  possible to find rank-preserving transformations of zero patterns that increase the number of zeros?}

We remark that class of transformations which preserve sign-nonsingular zero patterns (\emph{i.e} patterns all of whose corresponding matrices are nonsingular) has been studied in \cite{BeaYe95}.


\item
It seems that our method can be profitably applied to obtain bounds on the nullity of signed graphs, a problem which has been recently introduced and studied in \cite{AraHalLiHol13}.
\item
To handle patterns with unspecified off-diagonal entries, it is possible to define a further variant of the signed forcing game in which edges are also marked with $+$ and $-$ markers, in a similar way to the way we marked vertices. We leave the exploration of this variant to future efforts.
\end{enumerate}

\section{Acknowledgments}
We are grateful to Leslie Hogben for interesting discussions about zero forcing and to the two anonymous referees for comments which have greatly enchanced both the substance and the presentation of the paper. Graphviz was used to draw the figures.

\bibliographystyle{abbrv}
\bibliography{cdv,nuim}

\end{document}